\theoremstyle{plain} \newtheorem*{theorem}{Theorem}
\theoremstyle{plain} \newtheorem{lemma}{Lemma}[section]
\theoremstyle{remark} \newtheorem{remark}[lemma]{Remark}
\theoremstyle{plain} \newtheorem{proposition}[lemma]{Proposition}
\theoremstyle{plain} \newtheorem{corollary}[lemma]{Corollary}
\title{Jordan Matsuo algebras over fields of characteristic 3}
\author{Takahiro Yabe}
\date{August 7, 2017}
\begin{document}
\large
\baselineskip17pt
\maketitle

\begin{abstract}
\noindent 
The Matsuo algebra associated with a connected Fischer space is shown to be a Jordan algebra over a field of characteristic $3$ if and only if the Fischer space is isomorphic to either the affine space of order $3$ or the Fischer space associated with the symmetric group. 
The proof uses a characterization of the affine spaces of order $3$ and equivalence of Jordan and linearized Jordan identities over a field of characteristic $3$ in case the algebra is spanned by idempotents. 
\end{abstract}

\section{Introduction}
From any partial triple system $(\mathcal{P},\mathcal{L})$, a family of commutative nonassociative algebras  parametrized by one parameter $\delta$ was constructed in \cite{mat03} (cf. \cite{mat05}).
Such algebras over a field $\mathbb{F}$ are called the \textit{Matsuo algebras} associated with $(\mathcal{P},\mathcal{L})$ and denoted by $M((\mathcal{P},\mathcal{L}),\delta,\mathbb{F})$, where $\delta\in\mathbb{F}$.
If $(\mathcal{P},\mathcal{L})$ belongs to Fischer spaces, the geometric counterparts to 3-transposition groups, and if $2\delta\neq0,1$, then the associated Matsuo algebra becomes an axial algebra of Jordan type $2\delta$ (\cite{hrs15}).

Recently, T.\ De Medts and F.\ Rehren considered the problem  of classifying Jordan Matsuo algebras associated with Fischer spaces in \cite{dr15}.
Note that the parameter $\delta$ must be $\frac{1}{4}$ if the associated Matsuo algebra is Jordan.

The purpose of this paper is to investigate this classification over fields of characteristic $3$.
Our main result is the following theorem.
\begin{theorem} 
\sl
Let $(\mathcal{P},\mathcal{L})$ be a connected Fischer space of rank $n$ and $\mathbb{F}$ a field of characteristic $3$.
Then the associated Matsuo algebra $M((\mathcal{P},\mathcal{L}), \frac{1}{4},\mathbb{F})$ is Jordan if and only if $(\mathcal{P},\mathcal{L})$ is isomorphic to either the Fischer space associated with the symmetric group $(\mathit{Sym}(n+1),(1,2)^{\mathit{Sym}(n+1)})$ or the $(n-1)$-dimensional affine space $AG(n-1,3)$ of order three.
\end{theorem}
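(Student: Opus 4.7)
The plan is to split the theorem into two implications; the forward direction (characterizing the Fischer spaces that yield Jordan algebras) is where the bulk of the work lies.

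For the \emph{if} direction, I would check the two families directly. The Matsuo algebra associated to $(\mathit{Sym}(n+1),(1,2)^{\mathit{Sym}(n+1)})$ can be realized as a Jordan subalgebra of the symmetric matrices over $\mathbb{F}$ via the standard permutation module, and this realization works independently of characteristic (as long as $\mathrm{char}\,\mathbb{F}\neq 2$), so in particular in characteristic $3$. For the affine case $AG(n-1,3)$, I would exploit the simplification $\tfrac{1}{4}=1$ in characteristic $3$, so that the Matsuo product on collinear points becomes $p\cdot q = p+q-r$; with this clean formula, the Jordan identity on triples of basis points reduces to a coordinate calculation in $\mathbb{F}_3^{\,n-1}$ that can be dispatched by case analysis on the mutual incidence pattern of the relevant points.

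For the \emph{only if} direction, I would first invoke the equivalence, promised in the abstract, between the Jordan identity and its full linearization on algebras spanned by idempotents in characteristic $3$; this reduces the question to verifying one multilinear identity on every $4$-tuple of points $a,b,c,d\in\mathcal{P}$. Next I would expand this multilinear identity in terms of the Matsuo multiplication. Because each product of two distinct points is either zero or a linear combination of three points determined by a line, the result is a linear relation in the Matsuo basis whose coefficients depend only on the incidence pattern of $\{a,b,c,d\}$ and the lines meeting pairs from this set. Working through the possible $4$-point configurations --- no collinear triple, a single collinear triple, a near-pencil, two disjoint lines, a triangle of lines, and the dual affine plane configuration arising in $AG(2,3)$ --- one isolates which local patterns force a contradiction and which are compatible with the Jordan identity.

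The third step is to translate these local constraints into a global geometric conclusion. Here I would invoke the characterization of affine spaces of order $3$ mentioned in the abstract: a connected partial linear space with lines of size $3$ satisfying the appropriate local conditions must be either an affine space $AG(n-1,3)$ or of symmetric-group type. Combining this with Fischer's classification of connected Fischer spaces (and explicitly exhibiting a forbidden $4$-point configuration inside each of the other Fischer families --- orthogonal, symplectic, unitary, and the three sporadic $Fi$-families) completes the proof. The main obstacle, I expect, will be the passage between the second and third steps: identifying exactly which local $4$-configurations are forbidden by the linearized Jordan identity in characteristic $3$, and verifying that this forbidden list is both strong enough to rule out every non-target Fischer space and weak enough to be satisfied globally by $AG(n-1,3)$, where dual affine planes abound as subgeometries and therefore must not be among the obstructions.
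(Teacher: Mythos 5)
There are two genuine gaps. First, in your \emph{if} direction for $AG(n-1,3)$ you propose to verify ``the Jordan identity on triples of basis points.'' The Jordan identity $(a^2b)a=a^2(ba)$ is not multilinear, so checking it on basis idempotents is essentially vacuous (for $a=a(p)$ one has $a^2=a$ and the identity collapses to commutativity) and does not imply the identity for general elements. What one can check on basis points is the \emph{linearized} identity $J(a(x),a(y),a(z),a(w))=0$; but in characteristic $3$ the linearized identity does \emph{not} imply the Jordan identity in general, precisely because specializing three slots to $a$ produces the factor $3$. The paper's key Lemma (Section 4) closes exactly this gap: for an algebra spanned by idempotents over a field of characteristic $3$, the linearized identity does imply the Jordan identity, by an expansion exploiting $a_i^2=a_i$. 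You invoke the equivalence only in the \emph{only if} direction, where in fact only the easy implication (Jordan $\Rightarrow$ linearized, valid in characteristic $\neq 2$) is needed; the nontrivial implication is what your \emph{if} direction silently requires.

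Second, in the \emph{only if} direction your obstruction analysis is organized by the planar incidence pattern of the four points $\{a,b,c,d\}$ and ``the lines meeting pairs from this set.'' This is not enough information to evaluate $J(a(x),a(y),a(z),a(w))$: expanding the multilinear identity produces basis vectors indexed by points such as $x^{\tau(z)\tau(w)\tau(y)\tau(w)}$, i.e.\ points deep inside the rank-$\le 4$ subspace generated by the four points, and whether the resulting terms cancel depends on the isomorphism type of that whole subspace, not on the local line pattern. This is exactly why the paper works with the Cuypers--Hall classification of rank-$4$ Fischer spaces (six types) and computes $J$ in each: the computation for M.~Hall's group $3^{10}{:}2$ shows six \emph{distinct} transpositions survive, whereas the same local configuration inside $AG(3,3)$ gives cancellation in characteristic $3$. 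Note also that the configurations that obstruct in characteristic $0$ (affine planes, via the $\tfrac{3}{32}$ coefficient) are \emph{not} obstructions here, so your forbidden-list heuristic imported from the characteristic-$0$ picture would misfire. Finally, your appeal to the full classification of connected Fischer spaces for rank $\ge 5$ is a much heavier (and differently structured) tool than the paper's local-to-global argument, which either builds $\mathit{Sym}(n+1)$ directly by extending a path in the noncommuting graph, or applies the Section~3 characterization of $AG(n-1,3)$ when every rank-$4$ subspace is affine; if you pursue the classification route you would also need to justify its applicability and carry out a forbidden-configuration computation in each remaining family.
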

By combining this result and the argument of \cite{dr15}, the classification of finite-dimensional Jordan Matsuo algebras associated with Fischer spaces over all fields of characteristic not $2$ is completed.
 (The second case in Theorem had been erroneously omitted in \cite{dr15} 
\footnote{The factor $\frac{1}{32}$ mentioned in page 339 of \cite{dr15} is in fact $\frac{3}{32}$. As it is $0$ in a field of characteristic $3$, one cannot conclude that the algebra considered there is not Jordan over a field of characteristic $3$, and it is indeed Jordan as we will see in the present paper.}.)

Section 2 recalls the definitions of 3-transposition groups, Fischer spaces and Matsuo algebras.
Section 3 gives a characterization of affine spaces of order three by means of their automorphisms.
This characterization is used in section 5.
Section 4 shows that the Matsuo algebras associated with the affine spaces of order 3 is Jordan over a field of characteristic $3$.
In Section 5, the proof of Theorem is completed.

\bigbreak
The author is grateful to Prof.\ Atsushi Matsuo for guidance throughout the work and to Prof.\ Dr.\ Tom De Medts for useful comments. 
The author thanks Prof.\ Hiroshi Yamauchi for careful reading of the manuscript of the paper.

\section{3-transposition groups, Fischer spaces and Matsuo algebras} 
A \textit{3-transposition group} is a pair $(G,D)$ where $G$ is a group and $D$ is a normal subset of $G$ satisfying $|g|=2$ and $|gh|\leq 3$ for all $g, h\in D$. 
The \textit{rank} of $(G,D)$ is the minimum size of subsets of $D$ generating $(G,D)$ and denoted by $r(G,D)$.
If $r(G,D)<\infty$, then $G$ is finite.

Let $S$ be a subset of a group $G$.
The \textit{noncommuting graph} of $S$ is a graph $\mathcal{S}$ with vertices $S$ and edges $\{g,h\}$ for $g,h\in S$ with $gh\neq hg$.
A 3-transposition group $(G,D)$ is said to be \textit{connected} if the noncommuting graph of $D$ is connected.
In this paper, for a group $G$, $g\in G$ and $S\subset G$, $g^G$ and $S^G$ denote $\{g^h:=h^{-1}gh\mid h\in G\}$ and $\bigcup_{g\in S}g^G$ respectively.

For the symmetric group $\mathit{Sym}(n)$, the pair $(\mathit{Sym}(n),(1,2)^{\mathit{Sym}(n)})$ is a 3-transposition group of rank $n-1$.
Set $\tilde{G}=\langle\tilde{g}_1,\tilde{g}_2,\tilde{g}_3,\tilde{g}_4\mid\tilde{g}_i^2=1, 1\leq i\leq4\rangle$ and $\tilde{D}=\{\tilde{g}_1,\tilde{g}_2,\tilde{g}_3,\tilde{g}_4\}^{\tilde{G}}$. 
Then the pair $(G,D)$ with $G=\tilde{G}/\langle g^h(h^g)^{-1}\mid g,h\in \tilde{D}\rangle$ and $D=\{g_1,g_2,g_3,g_4\}^{G}$ is a 3-transposition group of rank $4$, where $g_i$ is the image of $\tilde{g}_i$. 
This group $G$ is called \textit{M.\ Hall's group} $3^{10}:2$ (this group was defined in \cite{mhal62}).

A \textit{partial triple system} is a pair $(\mathcal{P},\mathcal{L})$ where $\mathcal{P}$ is a set and $\mathcal{L}$ is a set of $3$-subsets of $\mathcal{P}$ such that $|l_1\cap l_2|\leq1$ for all distinct $l_1,l_2\in\mathcal{L}$. 
We call an element of $\mathcal{P}$ a \textit{point} and an element of $\mathcal{L}$ a \textit{line}.
A \textit{subsystem} of $(\mathcal{P},\mathcal{L})$ is a partial triple system $(\mathcal{P}^{\prime},\mathcal{L}^{\prime})$ with $\mathcal{P}^{\prime}\subset\mathcal{P}$ and $\mathcal{L}\supset\mathcal{L}^{\prime}\supset\{l\in\mathcal{L}\mid|l\cap\mathcal{P}^{\prime}|\ge2\}$.
A subsystem of $(\mathcal{P},\mathcal{L})$ is said to be \textit{generated} by a subset $S$ of $\mathcal{P}$ if it is the minimum subsystem whose point set contains $S$.
A \textit{plane} of a partial triple system $(\mathcal{P},\mathcal{L})$ is a subsystem of $(\mathcal{P},\mathcal{L})$ generated by $l_1\cup l_2$ for a pair $(l_1,l_2)$ of distinct and intersecting lines.
The \textit{rank} of $(\mathcal{P},\mathcal{L})$ is the minimum size of subsets of $\mathcal{P}$ generating $(\mathcal{P},\mathcal{L})$ itself.
The rank of $(\mathcal{P},\mathcal{L})$ is denoted by $r(\mathcal{P},\mathcal{L})$.
Two partial triple systems are said to be \textit{isomorphic} if a bijection between the point sets induces a bijection between the line sets.

Two distinct points $p$ and $q$ are said to be \textit{collinear} if both $p$ and $q$ are contained in a common line. 
Let $\sim$ be the equivalence relation on $\mathcal{P}$ generated by $p\sim q$ for two collinear points $p$ and $q$.
For a $\sim$-equivalence class $\mathcal{P}^{\prime}$ of $\mathcal{P}$, we call the subsystem generated by $\mathcal{P}^{\prime}$ a \textit{connected component} of $(\mathcal{P},\mathcal{L})$.
The point set of a connected component of $(\mathcal{P},\mathcal{L})$ is a $\sim$-equivalence class of $\mathcal{P}$.
$(\mathcal{P},\mathcal{L})$ is said to be \textit{connected} if it has a unique connected component.

We recall the two important examples of partial triple systems.
The \textit{dual affine plane} $DA(2,2)$ \textit{of order two} is the partial triple system obtained by discarding a point and all lines including the point from the projective plane of order two.
Thus for the point set $\{p,q,r,s,t,u\}$, we may take the line set to be $\{\{p,r,s\},\{p,t,u\},\{q,r,t\},\{q,s,u\}\}$.
The second example is the $n$-dimensional \textit{affine space} $AG(n,3)$ \textit{of order three}.
The points are the vector of $\mathbb{F}_3^n$ and the lines are the $1$-flats of $\mathbb{F}_3^n$.
Thus for the point set $\mathbb{F}_3^n$, we may take the line set to be $\{\{x,y,-x-y\}\mid x,y\in\mathbb{F}_3^n, x\neq y\}$.

A \textit{Fischer space} is a partial triple system whose planes are isomorphic to either $DA(2,2)$ or $AG(2,3)$.
A Fischer space is said to be \textit{of affine type} if its planes are isomorphic to $AG(2,3)$. 

Let $(G,D)$ be a 3-transposition group.
The pair $(\mathcal{P},\mathcal{L})$ with $\mathcal{P}=D$ and $\mathcal{L}=\{\{c,d,c^d\}\mid c,d\in D, |cd|=3\}$ is a Fischer space.
We call this Fischer space the Fischer space associated with $(G,D)$ and denote it by $FS(G,D)$.
Let $FSS_n$ denote $FS(\mathit{Sym}(n),(1,2)^{\mathit{Sym}(n)})$.
Note that $r(FS(G,D))=r(G,D)$ and $FS(G,D)$ is connected if and only if $(G,D)$ is connected.

Let $(\mathcal{P},\mathcal{L})$ be a Fischer space.
For each $p\in\mathcal{P}$, the \textit{associated transposition} $\tau(p)$ is the automorphism of $(\mathcal{P},\mathcal{L})$ which fixes the points not collinear with $p$ and switches $q$ and $r$ if $\{p,q,r\}\in\mathcal{L}$.
Note that the mapping $\tau$ from $\mathcal{P}$ to the automorphism group of $(\mathcal{P},\mathcal{L})$ satisfies the following properties:
\begin{itemize}
\item $\tau(p)^2=id$ for all $p\in\mathcal{P}$.
\item $\tau(p)\tau(q)=\tau(q)\tau(p)$ if $p$ and $q$ are not collinear.
\item $\tau(p)^{\tau(q)}=\tau(q)^{\tau(p)}$ if $p$ and $q$ are collinear.
\item $\tau(p^{\tau(q)})=\tau(p)^{\tau(q)}$ for all $p,q\in\mathcal{P}$, where $p^f=f(p)$ for $p\in\mathcal{P}$ and an automorphism $f$ of $(\mathcal{P},\mathcal{L})$.
\end{itemize}
The pair $(G,D)$ with $G=\langle\tau(p)\mid p\in\mathcal{P}\rangle$ and $D=\{\tau(p)\mid p\in\mathcal{P}\}$ is a 3-transposition group.
If $(\mathcal{P},\mathcal{L})$ is connected, then the associated Fischer space $FS(G,D)$ is isomorphic to $(\mathcal{P},\mathcal{L})$.
So, for a Fischer space $(\mathcal{P},\mathcal{L})$, there exists a 3-transposition group $(G,D)$ such that $(\mathcal{P},\mathcal{L})$ is isomorphic to $FS(G,D)$.
For example, $AG(n,3)$ is isomorphic to $FS(\mathbb{F}_3^n\rtimes\mathbb{F}_3^{\times},\{(v,-1)\mid v\in\mathbb{F}_3^n\})$, where the action of $\mathbb{F}_3^{\times}$ is the scalar multiplication of $\mathbb{F}_3^n$.

Let $(\mathcal{P},\mathcal{L})$ be a partial triple system and $\mathbb{F}$ a field with $\mathrm{ch}(\mathbb{F})\neq 2$.
Choose a constant $\delta\in \mathbb{F}$.
Then, the \textit{Matsuo algebra} $M((\mathcal{P},\mathcal{L}),\delta,\mathbb{F})$ associated with $(\mathcal{P},\mathcal{L})$ is the $\mathbb{F}$-space $\bigoplus _{p\in\mathcal{P}}\mathbb{F}a(p)$ with the multiplication given as follows:
\begin{enumerate}
\item[\rm(i)] $a(p)a(p)=a(p)$,
\item[\rm(ii)] $a(p)a(q)=0$ if $p$ and $q$ are not collinear,
\item[\rm(iii)] $a(p)a(q)=\delta (a(p)+a(q)-a(r))$ if $p$ and $q$ are collinear and $\{p,q,r\}$ is the common line.
\end{enumerate}
In \cite{hrs15}, it is proved that $M((\mathcal{P},\mathcal{L}), \delta , \mathbb{F})$ is a primitive axial algebra of Jordan type $2\delta$ if $(\mathcal{P},\mathcal{L})$ is a Fischer space and $\delta\neq0, \frac{1}{2}$.

Let $\{(\mathcal{P}_i,\mathcal{L}_i)\}_{i\in I}$ be the set of connected components of a Fischer space $(\mathcal{P},\mathcal{L})$.
Then  
$$M((\mathcal{P},\mathcal{L}),\frac{1}{4},\mathbb{F})\cong\bigoplus_{i\in I}M((\mathcal{P}_i,\mathcal{L}_i),\frac{1}{4},\mathbb{F}).$$ 
From now on, we assume that Fischer spaces are connected. 

\section{The structure of affine spaces as Fischer spaces}
In this section, we give a characterization of affine spaces using the mapping $\tau$ defined for each Fischer space in the preceding section.

First, let us give an alternative construction of $AG(n-1,3)$.
Let $I_n=\{1,2,\ldots,n\}$, $R_n^{(k)}$ be the set of the sequences $(i_{-1},i_1,\ldots,i_k)$ of the elements of $I_n$ and 
$R_n=\bigcup_{k=0}^{\infty}R_n^{(k)}$.
If a sequence $x\in R_n$ is in $R_n^{(k)}$, we say that the length of $x$ is $k$.
Let $\rho :R_n\to \mathrm{Map}(R_n,R_n)$ be the mapping defined by setting 
$$\rho(j_{-1},j_1,\ldots,j_k)(i_{-1},i_1,\ldots,i_m)=(i_{-1},i_1,\ldots,i_m,j_k,\ldots,j_1,j_{-1},j_1,\ldots,j_k)$$
for $(j_{-1},j_1,\ldots,j_k),(i_{-1},i_1,\ldots,i_m)\in R_n$.

Let $\approx$ be the equivalence relation on $R_n$ generated by the following elementary equivalences:
\begin{itemize}
\item $\rho(x_k)\cdots\rho(x_1)(x_1)\approx\rho(x_k)\cdots\rho(x_2)(x_1)$ for $k\geq1$ and $x_1,\ldots,x_k\in R_n$.
\item $\rho(x_k)\cdots\rho(x_2)\rho(x_2)(x_1)\approx\rho(x_k)\cdots\rho(x_3)(x_1)$ for $k\geq2$ and $x_1,\ldots,x_k\in R_n$.
\item $\rho(x_k)\cdots\rho(x_2)(x_1)\approx\rho(x_k)\cdots\rho(x_3)\rho(x_1)(x_2)$ for $k\geq2$ and $x_1,\ldots,x_k\in R_n$.
\item $\rho(x_k)\cdots\rho(x_2)(x_1)\approx\rho(x_k)\cdots\rho(x_5)\rho(x_2)\rho(x_3)\rho(x_4)(x_1)$ for $k\geq4$ and $x_1,\ldots,x_k\in R_n$.
\end{itemize}
Let $\mathcal{Q}_n=R_n/\approx$, $f$ the natural surjection from $R_n$ to $\mathcal{Q}_n$ and  $q_i=f(i)$ for $i\in I_n$.
Then the mapping $\rho$ induces a mapping $\sigma:\mathcal{Q}_n\to\mathrm{Map}(\mathcal{Q}_n,\mathcal{Q}_n)$ because $\rho(y)(x)\approx\rho(x)(y)\approx\rho(x)(z)\approx\rho(z)(x)$ if $y\approx z$. 
Set $\mathcal{M}_n=\{\{p,q,p^{\sigma(q)}\}\mid p,q\in\mathcal{Q}_n, p\neq q\}$, where $p^{\sigma(q)}=\sigma(q)(p)$.
Then, the pair $(\mathcal{Q}_n,\mathcal{M}_n)$ is a partial triple system.

\begin{proposition} 
\sl
$(\mathcal{Q}_n,\mathcal{M}_n)$ is isomorphic to $AG(n-1,3)$.
\label{lem}
\end{proposition}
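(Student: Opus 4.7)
The plan is to construct an explicit isomorphism. Identify $AG(n-1,3)$ with the affine hyperplane $W = \{w \in \mathbb{F}_3^n : w_1 + \cdots + w_n = 1\}$ equipped with the Fischer-space lines $\{a,b,2a-b\}$ for distinct $a,b\in W$ (in characteristic $3$, $2a-b=2b-a$, and $a+b+(2a-b)=3a$ has coordinate sum zero, so this is the unique line of $AG(n-1,3)$ through $a$ and $b$). The points $e_1,\ldots,e_n$ are affinely independent in $W$, which has cardinality $3^{n-1}$.

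Define $v:R_n\to W$ recursively by $v((i_{-1}))=e_{i_{-1}}$ and
\[ v((i_{-1},i_1,\ldots,i_k))=2e_{i_k}-v((i_{-1},i_1,\ldots,i_{k-1})) \qquad (k\geq 1). \]
A short induction confirms that the coordinates of $v(x)$ always sum to $1$, so $v$ lands in $W$. A second induction on the length of $y$ establishes the key identity
\[ v(\rho(y)(x))=2v(y)-v(x) \qquad (x,y\in R_n). \]
Using this identity, each of the four elementary equivalences becomes a one-line computation in characteristic $3$: relations $1$ and $2$ reduce to $2v(x)-v(x)=v(x)$ and $2v(y)-(2v(y)-v(x))=v(x)$; relation $3$ reduces to $2v(y)-v(x)=2v(x)-v(y)$, i.e.\ $3(v(x)-v(y))=0$; and relation $4$ expands both sides to $2v(x_4)-2v(x_3)+2v(x_2)-v(x_1)$. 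Hence $v$ descends to a well-defined map $\bar{v}:\mathcal{Q}_n\to W$ satisfying $\bar{v}(\sigma(q)(p))=2\bar{v}(q)-\bar{v}(p)$, so $\bar{v}$ carries each line of $\mathcal{M}_n$ to a line of $W$.

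It remains to show $\bar{v}$ is bijective. For surjectivity, observe that $\tau(e_i)\tau(e_j)$ acts on $W$ as translation by $e_j-e_i$; since $\{e_j-e_i\}$ spans $\{v\in\mathbb{F}_3^n:\sum v_i=0\}$, the group generated by $\{\tau(e_i)\}_{i=1}^n$ acts transitively on $W$, and starting from $e_1=v((1))$ we hit every element of $W$. Injectivity will be the main obstacle: one must show that any two sequences in $R_n$ with the same $v$-value lie in the same $\approx$-class. I would argue this by constructing a candidate inverse $\phi:W\to\mathcal{Q}_n$ sending each $w$ to the $\approx$-class of a chosen preimage, and verifying well-definedness by showing the four elementary equivalences form a complete set of relations for the reflection-algebra structure on $W$. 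Relations $1$ and $2$ encode the involution law, relation $3$ encodes the characteristic-$3$ symmetry $\tau(a)(b)=\tau(b)(a)$, and relation $4$ encodes the three-reflection identity $\tau(a)\tau(b)\tau(c)=\tau(c)\tau(b)\tau(a)$; together these present $W$ as the free affine $\mathbb{F}_3$-space on $n$ affinely independent generators.
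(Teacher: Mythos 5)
Your construction of the map $\bar v:\mathcal{Q}_n\to W$ is sound: the identity $v(\rho(y)(x))=2v(y)-v(x)$ is correct, the four elementary equivalences are indeed annihilated by $v$ (relation 3 genuinely using characteristic 3), and your surjectivity argument via the translations $\tau(e_i)\tau(e_j)$ works. This half matches the paper, which merely asserts that the analogous map $q_{i_1}^{\sigma(q_{i_2})\cdots\sigma(q_{i_k})}\mapsto e_{i_1}^{\tau(e_{i_2})\cdots\tau(e_{i_k})}$ is ``well-defined by the structure of $AG(n,3)$''; you make the verification explicit, which is a plus.

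The gap is injectivity, and you have in effect only restated it rather than proved it. Your plan --- build an inverse $\phi:W\to\mathcal{Q}_n$ and verify well-definedness ``by showing the four elementary equivalences form a complete set of relations'' --- is circular as written: well-definedness of $\phi$ (equivalently $\phi\circ\bar v=\mathrm{id}$) \emph{is} the statement that any two sequences with the same $v$-value are $\approx$-equivalent, i.e.\ that the relations are complete, and you give no argument for completeness. Your dictionary between the four moves and the involution, commutation and three-reflection identities explains why the relations are \emph{sound} (which you already used to define $\bar v$), not why they suffice. The paper closes exactly this gap by a normal-form-plus-counting argument: it exhibits a set $S_n$ of normal-form sequences, proves every $\approx$-class contains a representative in $S_n$ (so $|\mathcal{Q}_n|\le|S_n|$), and computes $|S_n|=3^{n-1}$ by a binomial/roots-of-unity calculation; since $\bar v$ is a surjection onto a set of exactly $3^{n-1}$ elements, it is then forced to be a bijection. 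Your proof needs either such a bound on $|\mathcal{Q}_n|$ or a direct rewriting argument connecting any two $v$-equal words by the four moves; as it stands the key point is asserted, not established.
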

In order to show this proposition, let us consider the set $S_n$ of sequences $(i_{-1},i_1,\ldots,i_m)\in R_n$ satisfying the following conditions:
\begin{itemize}
\item $|\{j\in\{-1,1,\ldots,m\}\mid i_j=i\}|\leq2$ for all $i\in I_n$,
\item $\forall j\in\{-1,1,\ldots,m-2\}, i_j\leq i_{j+2}$,
\item $\forall k\in I_n$, $|\{j\in\{-1,1,\ldots,m\}\mid i_j=k\}|\leq 2$,
\item $i_j=i_k \Rightarrow j-k=\pm 2$, $j\in  2\mathbb{Z}$ and $i_j>i_l$ for all $l\in 2\mathbb{Z}+1$.
\end{itemize}
\begin{lemma}
\sl
The restriction $f|_{S_n}:S_n\to\mathcal{Q}_n$ is surjective.
\end{lemma}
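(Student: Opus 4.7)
The plan is to show that every sequence $x\in R_n$ is $\approx$-equivalent to some sequence in $S_n$, by induction on the length $m$ of $x = (i_{-1},i_1,\ldots,i_m)$. The base case $m=0$ is trivial: each singleton $(i_{-1})$ satisfies the defining conditions of $S_n$.

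For the inductive step, I would first convert the four elementary equivalences into a toolkit of local rewrites on the entries of a sequence. Starting from the identity $x = \rho(i_m)\cdots\rho(i_1)(i_{-1})$, plugging single-element arguments into equivalence~3 yields the swap $(i_{-1},i_1,i_2,\ldots)\approx(i_1,i_{-1},i_2,\ldots)$, while equivalence~4 yields $(i_{-1},i_1,i_2,i_3,\ldots)\approx(i_{-1},i_3,i_2,i_1,\ldots)$. Applying equivalence~4 more generally with a nontrivial prefix $x_1$ and single-element $x_2,x_3,x_4$ reverses any three consecutive entries $(p,q,r)\mapsto(r,q,p)$, and so produces a swap of adjacent same-parity entries at every interior position. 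Equivalences~1 and~2, with suitable choices of $x_j$, give collapse moves that eliminate certain local repetitions and strictly shorten the sequence.

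Using these derived moves the normalization then proceeds in stages:
\begin{enumerate}
\item[(i)] Eliminate any element occurring three or more times by pulling copies close together via the swap moves and applying collapse, reducing length each time.
\item[(ii)] Sort the odd-position subsequence $(i_{-1},i_1,i_3,\ldots)$ and the even-position subsequence $(i_2,i_4,\ldots)$ into non-decreasing order using the adjacent same-parity swaps, establishing the monotonicity condition.
\item[(iii)] If an element occurs at both an odd and an even position, invoke equivalence~4 with a longer $x_1$ to move its two occurrences into a configuration to which a collapse move applies, again decreasing length.
\item[(iv)] Re-sort and verify the remaining conditions on doubled entries (two consecutive even positions, and dominance over all odd entries), enforcing them by further swaps if needed.
\end{enumerate}

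The main obstacle is stage~(iii). Removing a cross-parity repetition does not follow from the single-element moves and requires a careful chain of applications of equivalences~4 and~1 with nontrivial prefixes, of which one must check that it reliably brings matching entries into cancellable position. A secondary subtlety is that equivalences~3 and~4 preserve length while the collapse-producing form of equivalence~1 may temporarily increase it, so the induction must be set up against a well-founded ordering refining length — for instance a lexicographic combination of length and multiset of entries — under which one verifies that the normalization terminates.
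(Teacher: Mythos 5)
Your overall strategy is the same as the paper's: turn the four elementary equivalences into local rewrite moves (transpositions within a parity class, plus length-decreasing collapses) and normalize an arbitrary representative into $S_n$. But as written the proof has two holes, one you flag yourself and one you do not.

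The obstacle you announce at stage~(iii) is in fact not an obstacle, and your hesitation there signals that the argument has not been carried through. The moves you have already derived suffice: the swap of the first two entries (equivalence~3 with single elements) together with the reversal of three consecutive entries (equivalence~4 with a prefix) generate all permutations of the odd-indexed entries among themselves and of the even-indexed entries among themselves; so a value occurring at an odd and at an even position can be brought to positions $1$ and $2$, where the single-element instance of equivalence~2 with a length-zero prefix deletes the adjacent equal pair $(\ldots,c,c,\ldots)\approx(\ldots,\widehat{c,c},\ldots)$. (Two occurrences both at odd positions go to positions $-1$ and $1$ and are deleted by equivalence~1.) This is exactly the paper's invariances (1)--(3). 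The genuine gap is at your stage~(iv): the condition $i_j>i_l$ for all odd $l$ when $i_j$ is doubled cannot be enforced ``by further swaps.'' Parity-preserving transpositions never change \emph{which} value is doubled, and the three-entry reversal fixes the pattern $(a,b,a)$. You need the additional rewrite $(\ldots,a,b,a,\ldots)\approx(\ldots,b,a,b,\ldots)$ (the paper's invariance~(4)), which comes from noting that the tail $a,b,a$ is $\rho\bigl((b,a)\bigr)$ applied to the prefix and that $(b,a)=\rho(a)(b)\approx\rho(b)(a)=(a,b)$ by elementary equivalence~3 one level down. Without this move you only land in the larger set obtained by dropping the dominance condition from $S_n$, and the count $|S_n|=3^{n-1}$ in the following lemma --- hence the proof of the Proposition --- no longer goes through. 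Finally, your worry about termination is unfounded: every move used is either length-preserving or strictly length-decreasing, so plain induction on length (with finitely many length-preserving moves between collapses) suffices; no refined well-ordering is needed.
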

\begin{proof}
First note the following invariance of the surjection $f:R_n\to\mathcal{Q}_n$:
\begin{enumerate}
\item[(1)]$f(i_{-1},\ldots,i_j,i_{j+1},i_{j+2},\ldots,i_m)=f(i_{-1},\ldots,i_{j+2},i_{j+1},i_j,\ldots,i_m)$ for all $j\in\{-1,1,\ldots,m-2\}$.
\item[(2)]$f(i_{-1},i_1=i_{-1},i_2,\ldots,i_m)=f(i_{-1},i_2,\ldots,i_m)$.
\item[(3)]$f(i_{-1},\ldots,i_j,i_{j+1}=i_j,\ldots,i_m)=f(i_{-1},\ldots,i_{j-1},i_{j+2},\ldots,i_m),$ for all $j\in\{1,2,\ldots,m-1\}$.
\item[(4)]$f(i_{-1},\ldots,i_j,i_{j+1},i_{j+2}=i_j,\ldots,i_m)=f(i_{-1},\ldots,i_{j+1},i_j,i_{j+1},\ldots,i_m)$ for all $j\in\{1,2,\ldots,m-2\}$.
\end{enumerate}
Let $x\in\mathcal{Q}_n$ and $(i_{-1}, i_1,\ldots,i_m)\in R_n$ be a sequence satisfying $f(i_{-1}, i_1,\ldots,i_m)=x$.
Firstly, if $i_j=i_k=i_l=s$ and $j$, $k$ and $l$ are distinct, then by the invariance above, there exists a shorter sequence $y$ such that $f(y)=x$. 
Hence we may assume $|\{j\mid i_j=s\}|\leq2$ for all $s\in I_n$. 
Secondly, suppose $i_j=i_k=s$ and $j$ or $k$ is odd. Then, we can reduce $|\{r\mid i_r=s\}|$ by (1), (2), and (3).
So, we may assume $i_j=i_k, j\neq k \Rightarrow j,k\in  2\mathbb{Z}$.
Thirdly, by (1) and (4), if $i_j=i_k=s$, $i_l=t$ and $l$ is odd, then the value of $f$ does not change if we make $i_j=i_k=t$ and $i_l=s$. 
So we may assume $i_j>i_l$ for all $l\in 2\mathbb{Z}+1$ if there exists $k$ such that $i_j=i_k$ and $j\neq k$.
Finally, by (1), we may assume $\forall j\in\{-1,1,\ldots,m-2\}, i_j\leq i_{j+2}$. 
Therefore, we may assume the sequence  $(i_{-1}, i_1,\ldots,i_m)$ is an element of $S_n$.
So, for all $x\in\mathcal{Q}_n$, there exist an element $y$ of $S_n$ such that $x=f(y)$.
Therefore $f|_{S_n}$ is surjective.
\end{proof}
\begin{lemma}
\sl
$|S_n|=3^{n-1}$. 
\end{lemma}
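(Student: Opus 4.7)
The plan is to count $S_n$ via a bijection with a class of $4$-colorings of $I_n$ and then reduce the count to coefficients of $(1+x+x^2)^{n-1}$, exploiting its palindromic symmetry. For a sequence $(i_{-1},i_1,\ldots,i_m)\in S_n$ and each $i\in I_n$, I would define the \emph{type} $\phi(i)\in\{A,B,C,D\}$ as follows: $A$ if $i$ does not appear, $B$ if $i$ appears at an odd-indexed position, $C$ if $i$ appears exactly once and at an even position, and $D$ if $i$ appears twice (forced by the defining conditions to be at two consecutive even positions). The four numbered conditions defining $S_n$ translate precisely into: (a) $B\neq\emptyset$, (b) $|B|-|C|-2|D|\in\{1,2\}$ (from comparing the numbers of odd positions $\{-1,1,3,\ldots\}$ and even positions $\{2,4,\ldots\}$), and (c) if $D\neq\emptyset$ then $\min D>\max B$. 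Conversely, any such $\phi$ determines a unique sequence in $S_n$ --- odd positions are filled by $B$ in increasing order, even positions by $C\cup D$ (each $D$-element written twice consecutively) in non-decreasing order --- so counting $S_n$ reduces to counting colorings $\phi$ satisfying (a)--(c).

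Next, I partition the valid colorings by $k:=\max B\in\{1,\ldots,n\}$. Conditions (a) and (c) together with the maximality of $k$ force $\phi(i)\in\{A,B,C\}$ for $i<k$ and $\phi(i)\in\{A,C,D\}$ for $i>k$. Assigning weights $w(A)=0$, $w(B)=1$, $w(C)=-1$, $w(D)=-2$, condition (b) becomes $1+\sum_{i\neq k}w(\phi(i))\in\{1,2\}$. Substituting $\ell_i=w(\phi(i))+1$ for $i<k$ and $\ell_i=w(\phi(i))+2$ for $i>k$ gives a bijection onto $\{0,1,2\}^{I_n\setminus\{k\}}$, under which (b) reads $\sum_{i\neq k}\ell_i\in\{2n-1-k,\,2n-k\}$. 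The number of labelings with a prescribed sum $s$ is $T(n-1,s):=[x^s](1+x+x^2)^{n-1}$, so, noting $T(n-1,2n-1)=0$, summation over $k$ gives
\[
|S_n|=\sum_{k=1}^{n}\bigl[T(n-1,2n-1-k)+T(n-1,2n-k)\bigr]=\sum_{j=n-1}^{2n-2}T(n-1,j)+\sum_{j=n}^{2n-2}T(n-1,j).
\]

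To finish, I would invoke the palindromic identity $T(n-1,j)=T(n-1,2n-2-j)$ (the polynomial $(1+x+x^2)^{n-1}$ is reciprocal of degree $2n-2$) to rewrite the two sums as $\sum_{j=0}^{n-1}T(n-1,j)$ and $\sum_{j=0}^{n-2}T(n-1,j)$ respectively; their sum is $T(n-1,n-1)+2\sum_{j=0}^{n-2}T(n-1,j)$, which by one more application of palindromy equals the full coefficient sum $\sum_{j=0}^{2n-2}T(n-1,j)=3^{n-1}$. The main technical work here is the setup rather than the computation: one must patiently verify that each of the four numbered conditions defining $S_n$ corresponds exactly to one of the constraints (a)--(c) on $\phi$ and that the sequence can be reconstructed from $\phi$ unambiguously. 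Once that bijection is established, the generating-function manipulations and the palindromic collapse are routine.
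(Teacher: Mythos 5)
Your proposal is correct. The decoding of the four conditions into the data $(B,C,D)$ --- odd-position values, singleton even-position values, doubled even-position values --- with the constraints $B\neq\emptyset$, $|B|-|C|-2|D|\in\{1,2\}$ and $\min D>\max B$ is exactly right (the two cases of the second constraint correspond to $m$ even and $m$ odd), and the reconstruction of the sequence from $\phi$ is indeed unambiguous because both parity classes of positions are forced into sorted order. The subsequent stratification by $k=\max B$, the shift to labels $\ell_i\in\{0,1,2\}$, and the palindromic collapse of $\sum_j[x^j](1+x+x^2)^{n-1}$ all check out (I verified the formula against $n=1,2,3$). The paper reads the conditions the same way but organizes the count differently: it stratifies by the \emph{set of values used}, so that $|S_n|=\sum_k\binom{n}{k}|T_k|$ where $T_k$ consists of sequences with support exactly $\{1,\ldots,k\}$; within each stratum the element is determined by the set $C$, whose admissible sizes are cut out by a congruence modulo $3$, and the resulting binomial sum is evaluated by a roots-of-unity filter to give $|T_k|=\frac{1}{3}(2^k-(-1)^k)$. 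Your route buys a computation that stays entirely inside integer generating-function identities (no complex cube roots of unity and no case analysis on $k-2t\bmod 3$), at the cost of the slightly more elaborate weighting/shifting setup; the paper's route yields as a by-product the finer statistic $|T_k|$, the number of elements of $\mathcal{Q}_n$ "supported" on a given $k$-subset of generators. Either argument is a complete and valid proof of the lemma.
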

\begin{proof}
Set $T_k=\{(i_{-1},i_1,\ldots,i_m)\in S_n\mid\{i_{-1},i_1,\ldots,i_m\}=\{1,\ldots,k\}\}$ for $1\leq k\leq n$.
For $x=(i_{-1}, i_1,\ldots ,i_m)\in S_n$, set $s(x)=|\{j\mid i_j=i_{j+2}\}|$ and $t(x)=|\{j\in 2\mathbb{Z}\mid i_j\neq i_{j\pm 2}\}|$. 
Then, for $x\in T_k$, $2s(x)+t(x)=m^{\prime}=k-s(x)-t(x)-1$ if $m=2m^{\prime}$ and $2s(x)+t(x)=m^{\prime}=k-s(x)-t(x)-2$ if $m=2m^{\prime}+1$. 
Since $x\in T_k$ is determined by $\{i_{2j}\mid i_{2j}\neq i_{2j\pm 2}\}$, 
\begin{eqnarray*}
|T_k|
&=&\sum_{0\leq t\leq \frac{k-1}{2}, k-2t\notin 3\mathbb{Z}}\binom{k}{t}\\
&=&\frac{1}{2}\sum_{0\leq t\leq k, k+t\notin 3\mathbb{Z}}\binom{k}{t}\\
&=&\frac{1}{2}(\sum_{t=0}^k\binom{k}{t}-\sum_{0\leq t\leq k, k+t\in 3\mathbb{Z}}\binom{k}{t})\\
&=&\frac{1}{2}(2^k-\frac{1}{3}(2^k+\omega^k(1+\omega)^k+\omega^{2k}(1+\omega^2)^k))\\
&=&\frac{1}{3}(2^k-(-1)^k),
\end{eqnarray*}
where $\omega=\frac{-1-\sqrt{-3}}{2}$. 
Therefore, 
\begin{eqnarray*}
|S_n|=\sum_{k=1}^n\binom{n}{k}|T_k|=3^{n-1}.
\end{eqnarray*}
\end{proof}
\begin{proof}[Proof of Proposition \ref{lem}]
Let $\{e_1,\ldots,e_{n-1}\}$ be a basis of $\mathbb{F}_3^{n-1}$ and $e_n=0\in\mathbb{F}_3^{n-1}$.
By the structure of $AG(n,3)$, the mapping from $\mathcal{Q}_n$ to $\mathbb{F}_3^{n-1}$ such that $q_{i_1}^{\sigma(q_{i_2})\cdots\sigma(q_{i_k})}\mapsto e_{i_1}^{\tau(e_{i_2})\cdots\tau(e_{i_k})}$ is well-defined and surjective.
By the lemmas above, $|\mathcal{Q}_n|\leq|S_n|=3^{n-1}$ and hence this map is bijective.
It is easy to see that this mapping induces a surjection from $\mathcal{M}_n$ to the set of lines of $AG(n-1,3)$.
So, this map induces an isomorphism of Fischer spaces.
\end{proof}

The following corollary gives the characterization of affine spaces mentioned at the beginning of this section.
\begin{corollary} 
\sl
Let $(\mathcal{P},\mathcal{L})$ be a connected Fischer space of affine type. 
If $(\mathcal{P},\mathcal{L})$ is of rank $n$ and $\tau(x)\tau(y)\tau(z)=\tau(z)\tau(y)\tau(x)$ for all $x,y,z\in\mathcal{P}$, then $(\mathcal{P},\mathcal{L})$ is isomorphic to $AG(n-1,3)$.
\label{cor}
\end{corollary}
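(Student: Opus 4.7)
My plan is to construct a Fischer-space isomorphism $\psi: (\mathcal{Q}_n, \mathcal{M}_n) \to (\mathcal{P}, \mathcal{L})$ and invoke Proposition \ref{lem}.

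First, in any connected affine-type Fischer space every pair of distinct points is collinear: given a path $p = p_0, p_1, \ldots, p_k = q$ of consecutively collinear points with $k \geq 2$, the plane generated by the lines through $(p_0, p_1)$ and $(p_1, p_2)$ is $\cong AG(2,3)$ by the affine-type hypothesis, and every two distinct points of $AG(2,3)$ are collinear; so $p_0$ and $p_2$ are collinear in $\mathcal{P}$, and induction on $k$ concludes.

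Fix generators $p_1, \ldots, p_n$ of $\mathcal{P}$ and define $\tilde\psi: R_n \to \mathcal{P}$ by $\tilde\psi(i_{-1}, i_1, \ldots, i_k) = p_{i_{-1}}^{\tau(p_{i_1})\cdots\tau(p_{i_k})}$. The $\tau$-axiom $\tau(p^{\tau(q)}) = \tau(p)^{\tau(q)}$ gives by induction the identity $\tilde\psi(\rho(y)(x)) = \tilde\psi(x)^{\tau(\tilde\psi(y))}$. To descend $\tilde\psi$ through $\approx$, I check the four elementary equivalences: (1) and (2) reduce immediately to $\tau(p)(p) = p$ and $\tau(p)^2 = \mathrm{id}$; (3) reduces to $\tilde\psi(x_1)^{\tau(\tilde\psi(x_2))} = \tilde\psi(x_2)^{\tau(\tilde\psi(x_1))}$, which holds by the preliminary step (both sides agree whether the two images are equal or collinear, in the latter case being the third point of the common line); and (4) is exactly the three-term commutator hypothesis of the corollary. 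The resulting $\psi: \mathcal{Q}_n \to \mathcal{P}$ has image closed under $(r, s) \mapsto \tau(r)(s)$ and containing each $p_i$, hence equal to $\mathcal{P}$, so $\psi$ is surjective; and $\psi(\sigma(q)(q')) = \tau(\psi(q))(\psi(q'))$ shows $\psi$ sends lines to lines (or collapses them to points).

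The main obstacle is injectivity, which I plan to obtain by showing $|\mathcal{P}| = |\mathcal{Q}_n| = 3^{n-1}$ via the $3$-transposition group $G = \langle \tau(p) \mid p \in \mathcal{P}\rangle$. Let $N \leq G$ be the index-$2$ subgroup of even-length words; it is generated by the order-$3$ elements $a_i = \tau(p_1)\tau(p_i)$. Two applications of the commutator hypothesis yield $(\tau(a)\tau(b))(\tau(c)\tau(d)) = (\tau(c)\tau(d))(\tau(a)\tau(b))$, so $N$ is abelian, hence elementary abelian of exponent $3$; applying the hypothesis at $(p_1, p_i, p_1)$ gives $\tau(p_1) a_i \tau(p_1) = \tau(p_i)\tau(p_1) = a_i^{-1}$, so $\tau(p_1)$ inverts all of $N$. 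Therefore $G = N \rtimes \langle \tau(p_1)\rangle$ and the (single) conjugacy class $D = \tau(p_1) N$ has $|D| = |N|$. Any $m$-element generating set of $G$ from $D$ produces an $(m-1)$-element generating set of $N$, and conversely $k$ generators $a_1', \ldots, a_k'$ of $N$ extend to $k+1$ generators $\tau(p_1), a_1'\tau(p_1), \ldots, a_k'\tau(p_1) \in D$ of $G$; so the rank hypothesis forces $\mathrm{rank}(N) = n - 1$ and $|\mathcal{P}| = |N| = 3^{n-1}$. Hence $\psi$ is bijective, and combined with the line preservation above it is a Fischer-space isomorphism; Proposition \ref{lem} then identifies $\mathcal{P}$ with $AG(n-1, 3)$.
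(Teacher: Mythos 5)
Your proof is correct. Its first half --- descending $\tilde\psi$ through the four elementary equivalences of $\approx$ to obtain a surjection $\psi:\mathcal{Q}_n\to\mathcal{P}$, with the three-term commutator hypothesis handling exactly the fourth equivalence --- is the same step the paper takes, only the paper compresses it into the sentence ``by the structure of $(\mathcal{P},\mathcal{L})$, there exists a surjection $\pi:\mathbb{F}_3^{n-1}\to\mathcal{P}$ \dots{} inducing the quotient map from $(\mathcal{Q}_n,\mathcal{M}_n)$.'' Where you genuinely diverge is injectivity. The paper argues on the kernel: if $\pi(v)=\pi(w)$ with $v\neq w$, equivariance gives $\pi(u)=\pi(u+v-w)$ for all $u$, so $\pi$ factors through $\mathbb{F}_3^{n-1}/\mathbb{F}_3(v-w)$ and $\mathcal{P}$ is generated by $n-1$ points, contradicting the rank hypothesis. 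You instead compute $|\mathcal{P}|$ outright: the commutator hypothesis makes the even subgroup $N$ of $G=\langle\tau(p)\mid p\in\mathcal{P}\rangle$ elementary abelian of exponent $3$, inverted by each transposition, and the rank hypothesis pins $N$ down to rank exactly $n-1$, so $|\mathcal{P}|=|D|=3^{n-1}=|\mathcal{Q}_n|$ and the surjection is forced to be a bijection. Both routes spend the rank hypothesis in the same essential way, but yours buys two things the paper leaves implicit: an explicit proof that all pairs of distinct points in a connected affine-type Fischer space are collinear (needed to see that $\psi$ carries lines to lines), and the identification of the transposition group as $3^{n-1}{:}2$, which the paper only records afterwards as a remark. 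The cost is length; the paper's kernel argument reaches the contradiction without ever computing $|G|$.
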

\begin{proof}
Let $\{p_1,\ldots,p_n\}$ be a subset of $\mathcal{P}$ generating $(\mathcal{P},\mathcal{L})$.
Then, by the structure of $(\mathcal{P},\mathcal{L})$, there exists a surjection $\pi:\mathbb{F}_3^{n-1}\to\mathcal{P}$ such that $e_i\mapsto p_i$ and it induces the quotient map from $AG(n-1.3)\cong(\mathcal{Q}_n,\mathcal{M}_n)$ to $(\mathcal{P},\mathcal{L})$.
It suffices to show that $\pi$ is injective.
Let $v,w\in\mathbb{F}_3^{n-1}$ and $\pi(v)=\pi(w)$. 
Then $\pi(u)=\pi(u+v-w)$ for all $u\in\mathbb{F}_3^{n-1}$.
Let $S$ be a subset of $\mathbb{F}_3^{n-1}$ such that the image of $S$ is a basis of $\mathbb{F}_3^{n-1}/\mathbb{F}_3(v-w)$.  
Then, $(\mathcal{P},\mathcal{L})$ is generated by $\{\pi(u)\mid u\in S\}\cup\{\pi(0)\}$.
Since $(\mathcal{P},\mathcal{L})$ is of rank $n$, $v=w$.
Therefore $\pi$ is injective.
\end{proof}

\section{Matsuo algebras associated with affine spaces}
Recall that a Jordan algebra is a commutative nonassociative algebra $A$ satisfying $(a^2b)a=a^2(ba)$ for all $a,b\in A$. 
It is known that the linearized Jordan identity
$$((xz)y)w+((zw)y)x+((wx)y)z-(xz)(yw)-(zw)(yx)-(wx)(yz)=0$$
holds for all $x,y,z,w\in A$ if $A$ is a Jordan algebra over a field of characteristic not $2$. (For example, see \cite{mcc04}, Proposition 1.8.5 (1).)
Conversely, a commutative nonassociative algebra $A$ over a field of characteristic not $3$ is a Jordan algebra if the linearized Jordan identity holds for all $x,y,z,w\in A$.
From now on, let $J(x,y,z,w)$ denote the left-hand side of the linearized Jordan identity.

\begin{lemma} 
\sl
Let $\mathbb{F}$ be a field of charateristic 3 and $A$ a commutative nonassociative algebra over $\mathbb{F}$ spanned by idempotents. Then $A$ is Jordan if the linearized Jordan identity holds for all $x,y,z,w\in A$.
\end{lemma}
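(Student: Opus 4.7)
The plan is to introduce the Jordan expression
$$F(a,b) := (a^2 b)a - a^2(ba),$$
so that $A$ is Jordan if and only if $F$ vanishes identically, and to deduce $F \equiv 0$ from three observations together with the spanning hypothesis: $(\mathrm{i})$ $F(e,b) = 0$ whenever $e$ is idempotent; $(\mathrm{ii})$ $F(\lambda a, b) = \lambda^3 F(a,b)$; and $(\mathrm{iii})$ $F(\cdot, b)$ is additive in its first argument once $J \equiv 0$ is assumed.

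The first two observations are essentially automatic. For idempotent $e$, commutativity alone gives $F(e,b) = (eb)e - e(be) = 0$. The cubic scaling is immediate from pulling scalars out of both summands.

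The heart of the argument is observation $(\mathrm{iii})$, which follows from the polarization identity
$$F(a_1 + a_2, b) - F(a_1, b) - F(a_2, b) = J(a_1, b, a_1, a_2) + J(a_1, b, a_2, a_2).$$
I would establish this by direct expansion: writing $(a_1 + a_2)^2 = a_1^2 + 2 a_1 a_2 + a_2^2$ (by commutativity), one enumerates the eight cross terms of the left-hand side and matches them term by term against the two evaluations of $J$ on the right. Under the hypothesis $J \equiv 0$, this identity gives $F(a_1 + a_2, b) = F(a_1, b) + F(a_2, b)$, and additivity extends to any finite sum by induction.

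Combining the three observations: since $A$ is spanned by idempotents, any $a \in A$ has a representation $a = \sum_i \lambda_i e_i$ with $e_i$ idempotent, and so
$$F(a,b) = \sum_i F(\lambda_i e_i, b) = \sum_i \lambda_i^3 F(e_i, b) = 0$$
for every $b \in A$. Hence $F \equiv 0$ and $A$ is Jordan.

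The main obstacle is verifying the polarization identity. The computation is mechanical but requires careful bookkeeping of the cross terms, in particular the factor $2$ arising from commutativity; once the identity is in hand, the remainder of the proof is a one-line application of additivity and cubic homogeneity. Note that characteristic $3$ is never explicitly invoked, so the same argument actually gives the conclusion in any characteristic different from $2$; for $\mathrm{ch}(\mathbb{F}) \neq 3$ the statement is of course already covered by the remark preceding the lemma.
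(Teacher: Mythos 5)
Your proof is correct and is essentially the paper's argument in a different packaging: the paper expands $F\bigl(\sum_i x_i a_i, b\bigr)$ multinomially in one step, kills the diagonal terms $x_i^3\bigl((a_i^2b)a_i-a_i^2(ba_i)\bigr)$ by idempotency, and identifies all cross terms as instances of $J$, which is exactly the content of your observations (i)--(iii) combined. Your polarization identity and the cubic homogeneity are both verified correctly, so no gap remains; your closing remark that characteristic $3$ plays no explicit role also applies to the paper's own computation.
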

\begin{proof} 
Suppose $a=\sum_{i=1}^nx_ia_i\in A$, where $x_1,\ldots,x_n\in\mathbb{F}$ and $a_1,\ldots,a_n\in A$ are idempotents.
Then, for all $b\in A$,
\begin{eqnarray*} 
(a^2b)a-a^2(ba)&=&\sum_{1\leq i\leq n}x_i^3((a_i^2b)a_i-a_i^2(ba_i))\\
&&+\sum_{1\leq i,j\leq n.i\neq j}x_i^2x_j((a_i^2b)a_j+2((a_ia_j)b)a_i-a_i^2(ba_j)-2(a_ia_j)(ba_i))\\
&&+2\sum_{1\leq i<j<k\leq n}x_ix_jx_kJ(a_i,a_j,b,a_k)\\
&=&\sum x_i^3((a_ib)a_i-a_i(ba_i))+\sum x_i^2x_jJ(a_i,a_i,b,a_j)\\
&=&0.
\end{eqnarray*} 
Since $A$ is spanned by idempotents, the Jordan identity holds for all $a,b\in A$.
\end{proof}

Let $\mathbb{F}$ be a field of characteristic $3$.
Let us show that $M(AG(n,3),\frac{1}{4},\mathbb{F})$ is Jordan.
To this end, it suffices to verify the linearized Jordan identity because Matsuo algebras are spanned by idempotents.
Since this identity is linear, it suffices to verify $J(a(x),a(y),a(z),a(w))=0$ for $x,y,z,w\in\mathbb{F}_3^n$. 
Since $a(x)a(y)=a(x)+a(y)-a(-x-y)$ for all $x,y\in\mathbb{F}_3^n$,
\begin{eqnarray*}
J(a(x),a(y),a(z),a(w))=3a(x+y+z+w)-3a(y-x-z-w)=0.
\end{eqnarray*}
\begin{remark}
If $\mathrm{ch}(\mathbb{F})\neq3$ and $\{x,y,z,w\}$ generates $AG(3,3)$, then 
$$J(a(x),a(y),a(z),a(w))=\frac{3}{64}(a(x+y+z+w)-a(y-x-z-w))\neq0.$$
So, for $n\geq3$, $M(AG(n,3),\frac{1}{4},\mathbb{F})$ is Jordan only if $\mathrm{ch}(\mathbb{F})=3$. 
\end{remark}

\section{Proof of Theorem}
It is proved that the Matsuo algebras associated with $FS(\mathit{Sym}(n),(1,2)^{\mathit{Sym}(n)})$ and $AG(2,3)$ are Jordan algebras if $\delta=\frac{1}{4}$ in \cite{dr15}.
By this result and the result in the preceding section, it is already proved that the Matsuo algebras associated with $FSS_n$ or $AG(n,3)$ are Jordan.
So it suffices to show that a Fischer space of rank $n$ is isomorphic to $FSS_{n+1}$ or $AG(n-1,3)$ if the associated Matsuo algebra is Jordan.

We proceed case-by-case for Fischer spaces of rank at most 4.
Fischer spaces of rank at most 3 are isomorphic to $FSS_2$, $FSS_3$, $FSS_4$ or $AG(2,3)$.

The Fischer spaces of rank 4 are classified in \cite{ch95}, Proposition 3.3.
They are associated with the six 3-transposition groups, including $\mathit{Sym}(5)$, M.\ Hall's $3^{10}:2$, and $\mathbb{F}_3^4\rtimes\mathbb{F}_3^{\times}$.
Let $(\mathcal{P},\mathcal{L})$ be a Fisher space associated with M.\ Hall's $3^{10}:2$ and $x,y,z,w$ be 3-transpositions generating $3^{10}:2$. Then,
\begin{eqnarray*}
&&J(a(x),a(y),a(z),a(w))\\
&&=\frac{1}{64}(a(x^{zwyw})+a(x^{wzyz})+a(x^{yzwz})-a(x^{zyw})-a(x^{wyz})-a(z^{wyx})).
\end{eqnarray*}
Since these six 3-transpositions of $3^{10}:2$ are distinct, the linearized Jordan identity does not hold.
So, the Matsuo algebra associated with $(\mathcal{P},\mathcal{L})$ is not Jordan.
By similar calculation for the other cases, the four Fischer spaces of rank 4 except $FSS_5$ and $AG(3,3)$ cannot give rise to Jordan algebras.

Suppose $n\geq5$.
Let $(\mathcal{P},\mathcal{L})$ be a Fischer space of rank $n$ with the associated Matsuo algebra Jordan.

Suppose $(\mathcal{P},\mathcal{L})$ has a subspace of $(\mathcal{P},\mathcal{L})$ associated with $\mathit{Sym}(5)$.
Let $(G,D)$ be a 3-transposition group such that $(\mathcal{P},\mathcal{L})$ is isomorphic to $FS(G,D)$.
To show that $G\cong\mathit{Sym}(n+1)$, let us construct a sequence $T_4\subseteq T_5\subseteq\cdots$ of subsets of $D$ such that the noncommuting graph $\mathcal{T}_r$ of $T_r$ is a line and $\langle T_r\rangle\cong\mathit{Sym}(|T_r|)$.
By the assumption, $G$ has a subgroup generated by 4-subset of $D$ and isomorphic to $\mathit{Sym}(5)$.
So there exists a $4$-subset of $D$ such that the noncommuting graph of it is a line.
Let $T_4$ be such a $4$-subset.
Assume that the set $T_r$ is a subset of $D$ satisfying the conditions above.
If $T_r$ generates $G$, then set $T_{r+1}=T_r$.
Suppose that $T_r$ does not generate $G$.
Then, for some $h\in D\setminus \langle T_r\rangle$, the noncommuting graph $\mathcal{T}_r(h)$ of $T_r\cup\{h\}$ is connected.
Let $h$ be such an element of $D$.
Since a rank $3$ quotient of $(H_3,E_3)$ cannot be a subgroup of $\mathit{Sym}(n)$, all subgroups generated by 4 elements of $T_r\cup \{h\}$ are isomorphic to $\mathit{Sym}(5)$.
In \cite{dr15}, it is proved that the noncommuting graph $\mathcal{T}$ of $T$ is not $\tilde{A}_{r-1}$ for each $T\subset D$ with $r(\langle T\rangle)\ge 4$.
Hence the number of elements of $T_r$ which do not commute with $h$ is at most two and these two elements do not commute. 
So there exists an element $h^{\prime}\in D$ such that $\langle T_r\cup \{h^{\prime}\}\rangle=\langle T_r\cup \{h\}\rangle$ and $\mathcal{T}_r(h^{\prime})$ is a line. 
Therefore $\langle T_r\cup\{h^{\prime}\}\rangle$ is isomorphic to $\mathit{Sym}(r+2)$.
Let $T_{r+1}=T_r\cup\{h^{\prime}\}$.
Then, since $|G|<\infty$, $G$ is generated by $T_r$ for some $r$ and hence $(G,D)$ is isomorphic to $(\mathit{Sym}(m),(1,2)^{\mathit{Sym}(m)})$ for some $m$.
Since $(\mathcal{P},\mathcal{L})$ is of rank $n$, $m=n+1$.

Suppose that all subspaces of $(\mathcal{P},\mathcal{L})$ of rank 4 are not isomorphic to $FSS_5$.
Then, all subspaces of rank 4 must be isomorphic to $AG(3,3)$.
So $(\mathcal{P},\mathcal{L})$ is of affine type and $\tau(x)\tau(y)\tau(z)=\tau(z)\tau(y)\tau(x)$ for all $x,y,z\in\mathcal{P}$. 
Then, by Corollary \ref{cor}, $(\mathcal{P},\mathcal{L})$ must be isomorphic to $AG(n-1,3)$.
The proof of  Theorem is completed.
\begin{remark}
When $\mathrm{ch}(\mathbb{F})\neq 3$, the Matsuo algebra associated with a Fischer space $(\mathcal{P},\mathcal{L})$ becomes Jordan if and only if $(\mathcal{P},\mathcal{L})$ satisfies the condition (i) or it is $AG(2,3)$. See \cite{dr15}.
\end{remark} 
\begin{remark}
Let $(G,D)$ be a connected 3-transposition group of rank $n$ such that the Matsuo algebra associated with $FS(G,D)$ is Jordan.
Then, by the results above, the central quotient of $(G,D)$ is isomorphic to $(\mathit{Sym}(n+1),(1,2)^{\mathit{Sym}(n+1)})$ or $(\mathbb{F}_3^{n-1}\rtimes\mathbb{F}_3^{\times},\{(v,-1)\mid v\in\mathbb{F}_3^{n-1}\})$.
\end{remark}
\begin{remark}
This theorem can be generalized to the infinite-dimensional case.
In this generalization, a Fischer space with the associated Matsuo algebras being Jordan is isomorphic to the Fischer space associated with the inductive limit of symmetric groups of finite degrees or that of finite-dimensional affine spaces.
\end{remark}

\bigbreak\noindent
Takahiro Yabe\\
Graduate School of Mathematical Sciences, The University of Tokyo\\
3-8-1, Komaba, Meguro-ku, Tokyo 153-8914, Japan
\\
Email: {\tt tyabe@ms.u-tokyo.ac.jp}
\end{document}